\newtheorem{theorem}{Theorem}
\newtheorem{corollary}[theorem]{Corollary}
\newtheorem{definition}[theorem]{Definition}
\newtheorem{construction}[theorem]{Construction}
\newtheorem{lemma}[theorem]{Lemma}
\newtheorem{observation}[theorem]{Observation}
\newtheorem{remark}[theorem]{Remark}
\newtheorem{aside}[theorem]{Aside}
\begin{document}
\newcommand{\al}{\alpha}
\newcommand{\be}{\beta}
\newcommand{\bul}{\bullet}
\newcommand{\lam}{\lambda}
\newcommand{\cF}{\mathcal{F}}
\newcommand{\Ga}{\Gamma}
\newcommand{\cR}{\mathcal{R}}
 \newcommand{\ib}{\textit{ibid.\,}}
\newcommand{\id}{\textit{id.}\,}
\newcommand{\mydel}[1]{}
\newcommand{\bbR}{\mathbbR}   
\newcommand{\cQ}{\mathcal{Q}}

\newcommand{\invtw}{\tfrac{1}{12}}
\newcommand{\myvspace}[1]{\vspace*{1mm}}
\newcommand{\dmjdel}[1]{}

\newcommand{\BB}{\scalebox{2}{$\bullet$}}

\title{A Non-constructive Proof of the Four Colour Theorem.} 
%\date{26 November 2023.  % {\bf (Revised Version. NCProof4CT42.tex)}}

\author{D. M. Jackson and  L. B. Richmond,\\ 
Dept. of Combinatorics and Optimization, \\
University of Waterloo, Waterloo, Ontario,
Canada.}

\maketitle 

\begin{center}
{\bf Dedicated to the Memory of W.T. Tutte}. 
\end{center}

\begin{abstract} 
 The approach is through a singularity analysis of generating functions for 3- and 4-connected triangulations, asymptotic analysis,  properties of the ${{}_3F_2}$ hypergeometric series, and Tutte's enumerative work on planar maps and chromatic polynomials.   
\end{abstract}

\section{Background,  Approach and a set $Q$ of maps.} 
A planar map or, more briefly, a \textit{map}, is a 2-cell embedding of a connected graph in the plane.  Determining the least number of colours required for colouring the vertices of a map so that no edge joins vertices of the same colour has a history extending over some 170 years (Saaty \& Kainen \cite{Saaty}).  The \textit{Four Colour Conjecture} (in its dual form) asserted that every planar map is vertex 4-colourable.   The first proof, by Appel \&  Haken~\cite{Appel1} in 1977, is heavily case-analytic, through the graph-theoretic operations of discharging and reducibility.  A  shorter such proof was given by Robertson \textit{et \textit{al.}}~\cite{Robertson} in 1997. These proofs are \textit{constructive}, giving a polynomial-time algorithm for a vertex $4$-colouring.  
  Chromatic polynomials were introduced by Birkhoff \& Lewis~\cite{Birkhoff} who, with Tutte, believed they should assist in a proof of the Four Colour Problem.

\subsection{Preliminaries}
\begin{remark}\label{assertion1}  {\bf The Approach:}  
The Four Colour Problem is equivalent to the following assertion:   every 3-connected triangulation (defined below) is 4-colourable ({\it p.}289, Bondy \& Murty \cite{Bondy}).     
{\rm  This assertion will now be proved  through the  enumeration of maps.  In the proof all of the maps are rooted.}
\end{remark}

\begin{definition}\label{DEF1}
A triangulation with no loops or multiple-edges is said to be \textbf{3-connected}.  A \textbf{separating 3-cycle}  is a 3-cycle with at least one internal vertex and at least one external vertex.  A triangulation with no separating 3-cycle is said to be \textbf{4-connected} (or {\it simple}).   A $3$-connected triangulation with a separating 3-cycle is said to be \textbf{cyclically 3-connected.}
\end{definition}

Following Tutte~\cite{Tutte1},  $g(x)$ and $h(x)$ denote the generating functions for 3- and 4-connected triangulations. 
He supposed that triangulations have $2n+2$ faces and showed that if $g(x)= \sum_{n=1}^\infty g_n x^n$ then

\begin{equation}\label{GNSIM}  
(a) \quad g(x) = \sum_{n\ge1}  \tfrac{2} {(n+1)!}  \tfrac{(4n+1)!} {(3n+2)!} x^n 
\quad\mbox{and}\quad  \quad (b) \quad 
 g_n \sim \tfrac{1}{16} \sqrt{\tfrac{3}{2\pi}}  n^{-\tfrac{5}{2}}  (\tfrac{256}{27})^{n+1}.
 \end{equation}
 where (a) is from (2.2)  and  (5.11) of \cite{Tutte1},  and  (b) is from (8.1) of \cite{Tutte1} through Stirling's Formula.  Note that $g(x)-h(x)$ is the generating function for 3-connected triangulations with separating 3-cycles.  Tutte also showed that   $h_n = [x^{2n+2}]  h(x) \sim  \tfrac{1}{128}\sqrt{\tfrac{3}{\pi}} \,  n^{-\tfrac{5}{2}}  (\tfrac{27}{4})^{n+1}$    
from (8.1)~\cite{Tutte1}. 

\begin{remark}{\bf A passage between algebraic and analytic combinatorics.} 
This is afforded by:  
\begin{enumerate}  \label{Methodology}
\item [$(a)$]  the \textit{fundamental formula}   
$ [x^n] (1-x)^{-\al} = \frac{n^{\al -1}}{\Gamma(\al) }  \left( 1 +\mathcal{O}\left( \tfrac{1}{n}\right)   \right),$ where $\al \in \mathbb{C}  \backslash  \mathbb{Z}_{\le0}$, ({\it p.381}, Thm.VI.1,  Flajolet \& Sedgewick \cite{Flaj}),   $\Gamma(x)$ is the gamma function, $\Gamma(\tfrac{1}{2})=\sqrt{\pi}$, and $[x^n]$ is the coefficient extraction operator;  
\item [$(b)$]  Pringsheim's Theorem ({\it p.}240 \cite{Flaj}), namely:  
{\it If a series $f(z)$  is representable at the origin by a series expansion that has non-negative integer coefficients and radius of convergence $r$, then the point $z=r$ is a singularity of $f(z).$}   
\end{enumerate}
\end{remark}

\subsection{3- and 4-connected triangulations.}   
  Several of Tutte's results in \cite{Tutte1} and \cite{Tutte2}  will be used.  In his  census of planar triangulations Tutte included the 3- and 4-connected triangulations and determined their generating functions for those with $2n$ faces for $n\ge1$ (see (\ref{VFE})).
 He denoted their generating functions by $g(x)$ and $h(x)$, respectively, and included their asymptotics and radii of convergence, $r_g$ and $r_h$.  In view of this, the same notation is used in this paper, despite their having a small, yet significant, difference in initial conditions for $n\le2$.  The two notations are distinguished in the text by writing 
`Tutte's $g(x)$ and $h(x)$',   and the    `present paper's $g(x)$ and $h(x)$'.

\subsubsection{A combinatorial construction.} 
 \begin{construction}\label{T34CON} 
To every 3-connected triangulation $T$ with at least 4 vertices there corresponds a unique 4-connected triangulation $H$ such that $T$ is equal to faces of $H$ filled in with 3-connected triangulations inscribed in each of its internal faces and identifying the two boundaries. (See Tutte \cite{Tutte1}.)       
\end{construction}
 
 The $m$ internal faces of $H$ receive triangulations with $n_1, n_2, \ldots$  or $n_m$ internal faces.  Two edges of a face meeting at a vertex determine an internal angle, and a face of a triangulation has three internal angles.  If $E, F$ and $V$ denote the number of edges, faces and vertices in a triangulation, then $V - E + F = 2$ from Euler's  equation.  Thus $2V - 2E + 2F = 4$.  An internal angle is the intersection of two edges of an internal face so the number of internal angles is $2E$. The number of internal angles is also $3F$ so $2E = 3F.$  These give $2V = F + 4,$ so $F$ is even.   The number of triangulations with $V = n+2$ or  $F=2n$ or $  E = 3n$, where $n\ge 1$, are equal so counting with respect to edges, faces or vertices are equivalent problems.  Note that
 \begin{equation} \label{VFE}
 V = n+2, \quad  F = 2n, \quad  E = 3n \quad \mbox{where} \quad n\ge1.
 \end{equation}

Let $g_n$ and $h_n$ denote, respectively,  the number of 3-connected and 4-connected triangulations with $2n+1$ internal faces.  In this context the map consisting of just one triangle is regarded as 3-connected but not 4-connected.   
Tutte also wrote $g_n = \psi_{n,0}$, where $\psi_{n,0}$ is the number of triangles with $k=m+3$ external edges so $3 = m+3$ for triangulations and $m=0$ ((1.3) Tutte \cite{Tutte1}).  Furthermore, the number of internal edges is denoted by $r$ and, according to \textit{(1.4)} of Tutte \cite{Tutte1}, $r = 3n +  m = 3n$, so the number of edges in a triangulation is $3(n+1)$.

\subsubsection{A divergence from Tutte's convention for $g(x)$ and $h(x)$.}
There is a single, but significant, difference between {\it Tutte's notation} and the {\it present paper's notation} concerning $g(x)$ and $h(x)$: Tutte defined $g(x)$ by
$g(x) = 1 + x +3 x^2 + 13 x^3 + \cdots,$ 
whereas in the present paper $g(x)$ is defined by
$g(x) = x +3 x^2 + 13 x^3 + \cdots.$       
As a consequence, it will be seen that the $g(x)$ and $h(x)$ (of the present paper) are related by $h(g(x)) = g(x) - x$,   and that this  identity, which is vital to this approach,  does not hold in the case of Tutte's $g(x)$ and $h(x)$.  Tutte also indicated that 
$h(x) = x^3 + \cdots$ since he does not count the triangulation with 2 faces as a 4-connected triangulation.  The triangulation with one internal face cannot be constructed  through Construction \ref{T34CON}, so (as also explained in  \cite{Cor1RRW}) 
\begin{equation}\label{EQU2}
h(g(x)) = g(x) - x.
\end{equation}
This accounts for the term $-x$ on the righthand side of this expression.  These  $g(x)$ and $h(x)$ (of the present paper) differ at $x=0$  from those of Tutte's. This does not affect their asymptotic limits.

\subsubsection{The set $\mathcal{Q}$ of maps and the proof strategy.}\label{ApStrat}  
A conjectural enumerative restatement of the Four Colour Problem.

\begin{observation}\label{Obs}
   (\textit{para.~2,  p.142, Richmond {\it et al.} }\cite{Cor1RRW})  {\it If the fraction of 4-colourable triangulations with $2n$ faces (or $n+2$ vertices) is not exponentially small then the 4-Colour Theorem  follows.}  
\end{observation}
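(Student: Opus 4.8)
The plan is to prove the contrapositive: assuming the Four Colour Theorem fails, I would show that the fraction of $4$-colourable triangulations with $2n$ faces is exponentially small, so that if this fraction is \emph{not} exponentially small the theorem must hold. The whole argument turns on the fact that Construction~\ref{T34CON} respects $4$-colourability, which upgrades the compositional identity~(\ref{EQU2}) to the level of $4$-colourable maps and then feeds into a singularity analysis.

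First I would verify the combinatorial core. Let $T$ be a $3$-connected triangulation obtained by inscribing $3$-connected triangulations into the internal faces of a $4$-connected skeleton $H$, as in Construction~\ref{T34CON}. I claim $T$ is $4$-colourable if and only if $H$ and every inscribed triangulation is $4$-colourable. The ``only if'' direction is immediate by restriction. For ``if'', note that in any proper $4$-colouring the three vertices of a bounding $3$-cycle receive three distinct colours; since any bijection between the three boundary colours of an inscribed piece and the three colours carried by the corresponding face of $H$ extends to a permutation of the four colours, each inscribed colouring may be relabelled to agree with $H$ along the shared triangle, and the relabelled colourings glue to a proper $4$-colouring of $T$. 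This is the classical observation of Birkhoff that $4$-colourability factors through the separating $3$-cycles.

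Let $\hat g(x)$ and $\hat h(x)$ denote the generating functions for $4$-colourable $3$- and $4$-connected triangulations, so that $\hat h(x)\le h(x)$ and $\hat g(x)\le g(x)$ coefficientwise. The factorisation just established yields, exactly as in~(\ref{EQU2}), the identity $\hat h(\hat g(x)) = \hat g(x) - x$. Now suppose the theorem fails. A minimal non-$4$-colourable triangulation can contain no separating $3$-cycle, since otherwise the factorisation would exhibit a strictly smaller non-$4$-colourable piece; hence, as already recorded in Remark~\ref{assertion1}, it is $4$-connected. Consequently $\hat h(x)\neq h(x)$: the two series already differ in some positive coefficient.

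The analytic heart, and the step I expect to be the main obstacle, is to convert the strict coefficientwise inequality $\hat h\neq h$ into a strict inequality of radii of convergence $r_{\hat g} > r_g$. Writing $\Psi(w) = w - h(w)$ and $\hat\Psi(w) = w - \hat h(w)$ for the compositional inverses of $g$ and $\hat g$, the dominant singularity of each is a branch point, so $r_g$ equals the maximum of $\Psi$ on the positive real axis, attained where $h'(w)=1$, and likewise $r_{\hat g}=\max\hat\Psi$, attained where $\hat h'(w)=1$ (the smooth implicit-function schema of Flajolet \& Sedgewick~\cite{Flaj}). Because $\hat h\le h$ with strict inequality at some positive coefficient, one has $\hat\Psi\ge\Psi$ on the positive axis with \emph{strict} inequality at the branch point $\tau_g$ of $\Psi$, whence $r_{\hat g}=\max\hat\Psi \ge \hat\Psi(\tau_g) > \Psi(\tau_g) = r_g$. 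By the fundamental formula and Pringsheim's Theorem this forces $[x^n]\hat g$ to grow at a strictly smaller exponential rate than $[x^n]g$, so the fraction $[x^n]\hat g / [x^n]g$ decays like $(r_g/r_{\hat g})^{n}\to 0$, that is, exponentially. The delicate points to verify are that the composition is genuinely of critical (branch-point) type, so that $\hat g$ inherits the square-root singularity of its own functional equation rather than an imported singularity of $\hat h$, and that $\hat g(r_{\hat g})$ reaches the critical value within the disc of convergence of $\hat h$; granting these, the contrapositive, and hence Observation~\ref{Obs}, follows.
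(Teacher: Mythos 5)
Your overall skeleton coincides with the paper's: the paper also establishes this statement in contrapositive form, via Corollary~\ref{ctLrc} together with Aside~\ref{rem2} --- a minimal non-4-colourable triangulation $L$ forces every 4-colourable triangulation into a restricted class obeying a compositional identity of the type (\ref{EQU2}), and a strict gap in radii of convergence then makes the 4-colourable fraction exponentially small by the fundamental formula of Remark~\ref{Methodology}(a). Your combinatorial core (4-colourability factors through Construction~\ref{T34CON}) is correct and is exactly the content of the paper's chromatic-polynomial identity (\ref{EQU1}). But you set up the restricted class differently: the paper fixes the minimal counterexample $L$ (necessarily 4-connected) and counts triangulations not containing a copy of $L$, which leaves the outer function $h$ \emph{unchanged} in the functional equation (\ref{EQU3}); you instead pass to the 4-colourable versions of both series, so the inner and the outer function both move. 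The paper's choice pays off in the radius comparison: from (\ref{rhgrg}) and its analogue for $g_L$ one gets $g(r_g)=r_h=g_L(r_L)$ with the \emph{same} $r_h$ on both sides, and the supposition $r_L=r_g$ is refuted simply because $g_L(r_g)<g(r_g)$ by (\ref{gLg2}).

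The genuine gap is in your analytic step, and it is exactly the point you flagged. You characterise $r_g$ as the maximum of $\Psi(w)=w-h(w)$ attained at an interior point $\tau_g$ with $h'(\tau_g)=1$, i.e.\ you invoke the singular-inversion (square-root branch point) schema. That schema does not apply here: such a branch point would give $g_n\sim c\,n^{-3/2}r_g^{-n}$, whereas Tutte's asymptotics (\ref{GNSIM}(b)) give $n^{-5/2}$, corresponding to the $\tfrac{3}{2}$-power singularity in (\ref{gg1g2}). Indeed, differentiating $\Psi(g(x))=x$ gives $\Psi'(g(x))=1/g'(x)>0$ for $0<x<r_g$, and since $g$ maps $[0,r_g)$ onto $[0,r_h)$ by (\ref{rhgrg}), one has $h'(w)<1$ throughout $[0,r_h)$: there is no interior point $\tau_g$ at which to evaluate $\hat\Psi$, and the chain $r_{\hat g}\ge\hat\Psi(\tau_g)>\Psi(\tau_g)=r_g$ has no starting point. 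The singularity of $g$ is not generated by a vanishing derivative of the inverse function; it is inherited from the singularity of $h$ itself through the boundary relation $g(r_g)=r_h$ (the ``critical composition'' case). The comparison therefore has to be run at the endpoint $w=r_h$, which is precisely what the paper's Pringsheim-based argument for (\ref{rhgrg}) does, at the cost of never needing to identify the local type of the singularity. Your remaining steps --- the identity $\hat h(\hat g(x))=\hat g(x)-x$, the 4-connectivity of a minimal counterexample, and the passage from a radius gap to exponential decay of the fraction --- are sound.
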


 Theorem~\ref{thm1} will show that the condition in  Observation \ref{Obs} may be satisfied by exhibiting such a subset of maps.   The latter will be denoted by $\mathcal Q$.   For this purpose a 3-connected triangulation will be said to be {\it cyclically separable} if it has  a separating 3-cycle (see Def. \ref{DEF1}).

 \begin{definition}  
 $\mathcal{Q}$ is the set of all rooted 3-connected triangulations $T$ having a rooted 3-connected triangulation $T_1$ with $V(T_1)\ge4$  vertices inside a 3-cycle $C$ of $T$ and a rooted 3-connected triangulation $T_2$ with $V(T_2)\ge4$ vertices outside $C$ so that $C$ is a separating $3$-cycle of $T$. (Note that we do not suppose that $T_1$ and $T_2$ have a 3-cycle.)
\end{definition}

It is noted that $(g(x) - h(x))^2$ is the generating function for the elements of $\cQ$ with $2n+2$ faces, and that $T_1$ and $T_2$ are not required  to have a  separating 3-cycle.  Note also that, by definition,  $T_1$ and $T_2$ are 3-connected and also that each has  at least $4$ vertices.  Thus $T_1$ and $T_2$ are elements of $\mathcal{Q}$ by definition.  It will be proved by induction that the elements of $\mathcal{Q}$ are 4-colourable (see Lemma \ref{QF4C}).
  
If a triangulation $T$ has a separating 3-cycle $C$ with the triangulation $L$ inserted into the interior of $C$ then $T$ is said to \textit{contain a copy} of $L$.  Since $L$ and the rest of $T$, namely, $T-L$, intersect in a 3-cycle, which is the complete graph $K_3$ with three vertices, then,  from ({\it p.230}, Thm.~IX.27, Tutte \cite{Tutte2}), the associated chromatic polynomials are related by  
\begin{equation}\label{EQU1}
\lam (\lam-1)(\lam-2)\cdot  P(T,\lam) = P(L,\lam)\cdot P(T-L,\lam). 
\end{equation} 
Thus if $L$ cannot be 4-coloured, so $P(L,4)=0$, then $T$ cannot be 4-coloured.

\section{The set $\mathcal{Q}$ of maps and their 4-colourability.} 

\subsection{A relation between the generating functions $g(x)$ and $h(x)$.}   
Thus, from  (\ref{GNSIM}), the radius of convergence of $g(x)$ is  $r_g = \tfrac{27}{256}$  and, because of the factor $n^{-5/2}$, it follows that  $g(r_g)$ converges.   The following corollary (Cor.~1. Richmond \textit{et al.}~\cite{Cor1RRW})  is now re-proved through the methodology of this paper. 

\begin{corollary}\label{ctLrc}
 If there is one 3-connected triangulation $L$ which cannot be 4-coloured then the radius of convergence for the 4-colourable  3-connected triangulations is strictly greater than that for 3-connected triangulations.   
 \end{corollary}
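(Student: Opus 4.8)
The plan is to compare the dominant singularities of two closely related functional equations: the one satisfied by all $3$-connected triangulations and the one satisfied by the $4$-colourable ones. Write $c(x)$ for the generating function of $4$-colourable $3$-connected triangulations and $\hat h(x)$ for that of the $4$-colourable $4$-connected triangulations; recall $r_g=\tfrac{27}{256}$. Since $c(x)$ has non-negative coefficients dominated termwise by those of $g(x)$, Pringsheim's Theorem gives immediately $r_c\ge r_g$, so the whole content of the statement is the \emph{strict} inequality.

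First I would lift the decomposition behind (\ref{EQU2}) to the level of colourability. Construction \ref{T34CON} presents every $3$-connected triangulation with at least $4$ vertices uniquely as a $4$-connected skeleton with a $3$-connected triangulation inscribed in each internal face, and iterating yields a decomposition into $4$-connected blocks. Applying (\ref{EQU1}) at $\lam=4$ across each separating $3$-cycle shows, by induction on the number of blocks, that such a triangulation is $4$-colourable if and only if \emph{every} block is $4$-colourable. Translating this refined bijection into generating functions exactly as (\ref{EQU2}) does for $g$ and $h$ yields
\[
c(x) = x + \hat h\bigl(c(x)\bigr), \qquad\text{alongside}\qquad g(x) = x + h\bigl(g(x)\bigr).
\]
The same induction shows that a non-colourable $3$-connected $L$ forces a non-colourable $4$-connected block, so $b_h:=h-\hat h$ is a series with non-negative coefficients that is not identically zero.

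The comparison is then carried out on the inverse functions. Put $\psi(y)=y-h(y)$ and $\hat\psi(y)=y-\hat h(y)$, so that $x=\psi(g(x))$ and $x=\hat\psi(c(x))$, and set $\tau=g(r_g)$, which is finite since $r_g$ is a branch-point singularity with $\tau<r_h\le r_{\hat h}$. Monotonicity of $g$ on $[0,r_g)$ gives $h'(y)<1$ for $0\le y<\tau$ and $h'(\tau)\le 1$, whence $\hat h'(\tau)<h'(\tau)\le 1$ because $b_h'(\tau)>0$. Consequently $\hat\psi'(y)=1-\hat h'(y)>0$ throughout $[0,\tau]$, so $\hat\psi$ maps $[0,\tau]$ analytically and bijectively onto $[0,x_0]$ with
\[
x_0=\hat\psi(\tau)=\tau-\hat h(\tau)=\bigl(\tau-h(\tau)\bigr)+b_h(\tau)=r_g+b_h(\tau)>r_g.
\]
The local inverse of $\hat\psi$ then provides an analytic continuation of $c$ to a complex neighbourhood of $[0,x_0]$; since $c$ has non-negative coefficients, Pringsheim's Theorem places a genuine singularity at $x=r_c$, and as there is none on $[0,x_0]$ we conclude $r_c\ge x_0>r_g$, which is the assertion.

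The step I expect to be the main obstacle is the first one: establishing rigorously that $4$-colourability of a $3$-connected triangulation is exactly the conjunction of $4$-colourability of its $4$-connected blocks, so that the colourable objects obey the \emph{same-shaped} equation $c=x+\hat h(c)$ merely with the smaller kernel $\hat h$. This needs a careful induction on the block decomposition through (\ref{EQU1}), together with the verification that the branch point satisfies $\tau<r_h\le r_{\hat h}$ so that $\hat h(\tau)$ and $b_h(\tau)$ are finite and positive; once these are secured the singularity comparison above is immediate.
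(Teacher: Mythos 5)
Your combinatorial reduction takes a genuinely different route from the paper's: the paper never introduces the generating function $c(x)$ of $4$-colourable triangulations or a colourable kernel $\hat h$; instead it forms $g_L(x)$, the series for triangulations containing no copy of a fixed non-colourable $L$, asserts the analogues $h(g_L(x))=g_L(x)-x$ and $g_L(x)=g(x)-x^{2i+1}$, and derives $r_L>r_g$ by contradiction from $r_h=g(r_g)$. Your first step --- that a $3$-connected triangulation is $4$-colourable exactly when every $4$-connected block in the iterated Construction~\ref{T34CON} decomposition is $4$-colourable, whence $c(x)=x+\hat h(c(x))$ with $b_h=h-\hat h$ a nonzero series with non-negative coefficients --- is sound: peeling off the inscribed pieces one at a time and applying (\ref{EQU1}) at $\lam=4$ gives $24^{m}P(T,4)=P(H,4)\prod_i P(T_i,4)$, and every factor is a non-negative integer.

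The gap is in the analytic half, at exactly the point you flagged for verification: you assume $\tau=g(r_g)<r_h\le r_{\hat h}$, but the paper establishes (\ref{rhgrg}) that $r_h=g(r_g)=\tau$ \emph{exactly} --- the singularity of $g$ at $r_g$ is inherited from that of $h$ at $\tau$, not from a branch point interior to the disc of convergence of $h$. Consequently one only knows $r_{\hat h}\ge r_h=\tau$, so $\hat h$ (hence $\hat\psi$) need not be analytic in any neighbourhood of the endpoint $y=\tau$, and the continuation of $c$ onto a complex neighbourhood of $[0,x_0]$ with $x_0=r_g+b_h(\tau)$ is unjustified. (The quantities $h'(\tau)$, $\hat h(\tau)$ and $b_h(\tau)$ are still finite because of the $n^{-5/2}$ in the asymptotics of (\ref{GNSIM}), but finiteness of a boundary value is not analyticity at that point.) The argument is repairable without ever reaching $\tau$: since $c_n\le g_n$ for all $n$ with strict inequality for at least one $n$ (the size of $L$), and $g$ converges at $r_g$, you get $\sigma:=c(r_g)<g(r_g)=\tau$; then $\hat h$ is analytic at $\sigma$ because $\sigma<\tau\le r_{\hat h}$, and $\hat\psi'(\sigma)\ge 1-h'(\sigma)>0$, so if $r_c$ equalled $r_g$ the inverse function theorem would continue $c=\hat\psi^{-1}$ analytically past $r_g$, contradicting Pringsheim. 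This salvages the strict inequality $r_c>r_g$, though not your stronger quantitative bound $r_c\ge r_g+b_h(\tau)$.
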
 

 \begin{aside}\label{rem2}
 Note that Corollary~\ref{ctLrc} implies that if a positive fraction of the 3-connected triangulations can be 4-coloured then the  4-colour theorem holds for triangulations.
 \end{aside}
 
\begin{proof}
 Let  $[x^{2i+1}] g_L(x)$ be the number of 3-connected triangulations not containing a copy of a 3-connected triangulation $L$ with $2i+1$ interior faces.    A similar argument to the one for (\ref{EQU2}) shows that  
 \begin{equation}\label{EQU3}
 h(g_L(x)) = g_L(x)-x. 
 \end{equation}
Note also  that
 \begin{equation}\label{gLg2}
g_L(x) = g(x) - x^{2i+1}. 
\end{equation}
   The coefficients of $g(x)$ and $g_L(x)$ are non-negative so, by Pringsheim's Theorem (see Remark~\ref{Methodology}(b)),  
     $r_g$ and $r_L$, the radii of convergence of $g(x)$ and $g_L(x)$, are  singularities of  $g(x)$ and $g_L(x)$, 
   respectively. 
(Note that a 4-connected triangulation cannot contain a copy of a triangulation $T$ since it would then have a separating 3-cycle, and therefore could not be 4-connected.)   In view of Pringsheim's Theorem,  only $x\ge0$ needs to be considered for radii of convergence arguments. 
 Let $r_h$  denote the radius of convergence of $h(x)$.   If $0\le x < r_g$ then $h(g(x))$ is analytic by (\ref{EQU2}), so $r_h \ge g(r_g)$.  Also  $g(r_g)$ converges because of the factor $n^{-5/2}$ in (\ref{GNSIM}(b)).  If $r_h < g(r_g)$ then, from (\ref{EQU2}),  it follows that  $h(g(x))$ and $g(x)$ have a singularity for $0 < x < r_g$, which is not possible,  so $r_h \ge g(r_g)$.  Now suppose that $r_h > g(r_g)$.  Then $h(g(x))$ is analytic at $x=r_g$. 
 But $h(g(x)) = g(x) - x$ from (\ref{EQU2}) so $g(x) - x$ is analytic at $x=r_g$.  Thus $g(x)$ is analytic at $x=r_g$.  
This contradicts Pringsheim's Theorem, so $r_h$ is not strictly greater than $g(r_g)$.  Hence 
\begin{equation}\label{rhgrg}
r_h = g(r_g).  
\end{equation}
 In addition, $r_L$, the radius of convergence of $g_L(x)$,  satisfies equation (\ref{EQU3}), and note that the coefficients of $g_L(x)$ are bounded by those of $g(x)$ since  every map counted by $g_L(x)$ is counted by $g(x)$ and so $r_L\ge r_g$.
 Now suppose that $r_L = r_g$.  
From~(\ref{rhgrg}),  a similar argument gives $r_L = g_L(r_L)$, and if $r_L= r_g$ then $g(r_g) = g_L(r_g).$    The coefficients of $g_L(x)$ are less than or equal to those of  $g(x)$.  But, from (\ref{gLg2}),  
 $[x^{2i+1}] g_L(x) = -1 + [x^{2i+1}] g(x)$ since $L$ is not counted by $g_L(x)$.  Thus  $g(r_g) < g_L(r_g)$,  contradicting $g(r_g) = g_L(r_g)$.   The supposition that $r_L = r_g$  leads to a contradiction, so $r_L > r_g,$  concluding the proof.   
 \end{proof}

\subsection{ Asymptotic forms for $g_n$.} \label{G34crt}   
 From \cite{Cor1RRW}, let 
$f_1(x) = 2\sum_{n\ge1} \tfrac{(4n-3)!}{n! (3n-1)!} x^n = \sum_{n\ge1}  \tfrac{1}{n!} f_{1,n} x^n$ 
\; \quad\mbox{so} \; \quad
$\tfrac{f_{1,n+1}} {f_{1,n}} = \tfrac{(n + \tfrac{1}{4}) (n - \tfrac{1}{4}) (n - \tfrac{1}{2})}  {(n + \tfrac{2}{3}) (n + \tfrac{1}{3}) }
\, \tfrac{4^4}{3^3}.$  Let  $f(x)$  be defined  by
$f(x) := f_1(\tfrac{27}{256} x) = 
 \tfrac{3}{4} \left(   {}_3F_2 (   \tfrac{1}{4},   - \tfrac{1}{4},  -\tfrac{1}{2};  \tfrac{2}{3},   \tfrac{1}{3} ; x  )  -1  \right),$
a  generalised hypergeometric function.
 Thus $f(x)$ has the same singularities as a  $ {}_3F_2$;  that is, at $x\in \{ 0, 1, \infty \}$ (Norlund \cite{Norlund}), but not at $x=0$ since, in the present paper's definition of  $g(x)$, 
$g(x) = x +2\sum_{n\ge 2} \tfrac{(4n+1)!} {(n+1)!(3n+2)!} \, x^n.$
Note that  $g_n>0$  for $n\ge1$, so from (\ref{GNSIM}(b)),  
\begin{equation}\label{gxconv}
g(x) \text{ converges at } x=\tfrac{27}{256}.
\end{equation}
Norlund \cite{Norlund} showed that the ${}_3F_2$ defining $f(x)$ has only one singularity on its circle of convergence, namely at $x= \tfrac{27}{256},$  and that, near $x= \tfrac{27}{256}$,  $g(x)$ has  the form 
\begin{equation}\label{gg1g2}
 g(\tfrac{27}{256} -x) = (x - \tfrac{27}{256})^{3/2} g_1(x) + g_2(x)
 \end{equation} 
 where $g_1(x)$ and $g_2(x)$ are  analytic at $x=\tfrac{27}{256}$.  
(In fact, $g_1(x)$ and $g_2(x)$ are analytic in the  finite complex plane, but not in the neighbourhood of $\infty$.)
Now 
\begin{equation} \label{g1g2A1AB}
 g_1(x) = B + \mathcal{O}\left( | \tfrac{27}{256} -x  |   \right)  \text{near}\, x = \tfrac{27}{256}, \quad {and} \quad   
g_2 (x) = A + A_1 (\tfrac{27}{256} -x) + \mathcal{O}\left( | \tfrac{27}{256} -x  |^2   \right),
\end{equation}  
where $A,  A_1$ and $B$ are constants (and shall be seen to be positive). 
  
  Singularity analysis may be used with Norland's expansion for $g(x)$ near $x= \tfrac{27}{256}$ and  also the {\it fundamental formula} of Remark \ref{Methodology}(a) to determine the asymptotic behaviour of $g_n$.  
 A contour of integration is chosen for estimating $[x^n] g(x)$.  This contour  encloses a $\triangle$-region  and depends upon $n$ and is  the one chosen for singularity analysis in Flajolet \& Sedgewick \cite{Flaj}.
  After circling $x=\tfrac{27}{256}$ the boundary of the $\triangle$-region continues in straight lines to the circle $|x| = \tfrac{27}{256}$ + $\delta$ where $\delta = \tfrac{1}{n}$.   
  Corollary~3 of Theorem~1 (\textit{p.}225, Flajolet \& Odlyzko \cite{FlajOdl}) states that if 
$f(x) = \sum_{j=0}^J  c_j \left(1 -\tfrac{256}{27}x\right)^{\al_j} +  \mathcal{O}\left(\vert 1 - \tfrac{256}{27}x\vert^C    \right)$
where $J$ is a non-negative integer and 
where $ \mathbb{R}(\al_0)    \le    \mathbb{R}(\al_1)  \le \cdots   \le \mathbb{R}(\al_n)  < M$ where $M$ is finite and $M>J$ (this is an expansion of Norlund-type with a singularity at $x=\tfrac{27}{256}$), then as $n \rightarrow \infty$, 
$$[x^n] f(x) = \sum_{j=0}^J  d_j  {n- \al_j-1 \choose n} + \mathcal{O} (n^{-C-1}),$$ 
where $C > \al_J$ (and $C, \al_J$ and $M$ are finite).   And so ((2.27c) Flajolet \& Odlyzko \cite{FlajOdl}) it follows that  
$$[x^n] f(x) = \sum_{j=0}^J  d_j \, n^{-\al_j-1}  + \mathcal{O}(n^{-C-1}).$$ 
 Only the  $j=0$ term will be used.  The expansion of $f(x)$ is in powers of $1 -\tfrac{256}{27}x$, rather than in powers of $1-x$, so 
$g_n = [x^n]g(x) \sim B\left(\tfrac{256}{27}\right)^n  n^{-5/2} \left(1 + \mathcal{O}(\tfrac{1}{n}) \right).$  
Thus, from (\ref{GNSIM}(b)),   
\begin{equation}\label{EQU4}
B = \tfrac{16}{27} \sqrt{\tfrac{3}{2\pi}}. 
\end{equation} 

 In the following argument it should be noted that:     
(a) terms containing powers of $x - \frac{27}{256}$ greater than $\tfrac{3}{2}$ are irrelevant according to singularity analysis, and are immediately neglected;  and
(b)  the generating function for triangulations with separating 3-cycles (see Def. \ref{DEF1}) is $g(x) - h(x)$. 
Thus, from  (\ref{gg1g2}), 
$$(g(\tfrac{27}{256} - x))^2 = (g_1(x))^2  (\tfrac{27}{256} -x)^3  + 2 g_1(x) g_2(x)  (\tfrac{27}{256} -x)^{3/2} + (g_2(x))^2 + \mathcal{O}( |x -\tfrac{27}{256} |^2)$$
  near $x=\frac{27}{256}$, so $(g(\tfrac{27}{256} - x))^2 =  2 g_1(x) g_2(x)   (\tfrac{27}{256} -x)^{3/2}$ plus a function that is analytic in the finite complex plane.  Then, from (\ref{g1g2A1AB}),  
  
\begin{equation}\label{ABequg2g} 
[x^n](g(\tfrac{27}{256} - x))^2 = 2 A \cdot B \,  [x^n]  (\tfrac{27}{256} -x)^{3/2}  (1+\mathcal{O}(\tfrac{1}{n})) \sim   3\,\tfrac{A \cdot B}{2\sqrt{\pi}} n^{-5/2}  \left(  \tfrac{256}{27} \right)^n  \left( 1 + \mathcal{O}{(n^{-1})}  \right).  
\end{equation}  
From (\ref{EQU4}),  and   
 (\ref{GNSIM}(a)),   
 for  $n\ge2$    
 \begin{equation}\label{EQN7}
A = g(\tfrac{27}{256}) =   \tfrac{27}{256} + 2\sum_{n\ge2}  g_n \left(\tfrac{27}{256}\right)^{n+1}, 
\end{equation}
which, from (\ref{gxconv}), converges. Thus $A$  is  positive (since $g_n \ge0$).  Then the fraction of the 3-connected triangulations with $2n$ faces that are elements of $\mathcal{Q}$ is asymptotic to a positive constant as $n\rightarrow \infty$. This is obtained  
from (\ref{ABequg2g}) and  (\ref{GNSIM}(b)),
% \begin{equation} \label{EQN8}
% {[x^n] \, (g(x))^2 \over [x^n] \, g(x)} \sim    \tfrac {27} {2}  \sqrt{\tfrac{3}{2}} \cdot A\cdot B
% \end{equation}
\begin{equation} \label{EQN8}
\frac{[x^n] \, (g(x))^2}{[x^n] \, g(x)} \sim    \tfrac {27} {2}  \sqrt{\tfrac{3}{2}} \cdot A\cdot B
\end{equation}
where $A\cdot B >0$  from (\ref{EQU4}) and (\ref{EQN7}).   
 This proves the following. 

\begin{theorem}\label{thm1}
The number of elements of $\mathcal{Q}$ with $2n$ faces is asymptotic to a positive fraction of the number of 3-connected triangulations with $2n$ faces as $n\rightarrow\infty$.
\end{theorem}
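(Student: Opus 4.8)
The plan is to assemble the singularity analysis already carried out in (\ref{gg1g2})--(\ref{EQN8}) into a comparison of coefficients. Recall, as recorded above, that $(g(x)-h(x))^2$ enumerates $\mathcal{Q}$ by size, while $g(x)$ enumerates the $3$-connected triangulations. Writing the number of elements of $\mathcal{Q}$ of the stated size as a coefficient of $(g-h)^2$ and the number of $3$-connected triangulations as the corresponding coefficient of $g$, the theorem reduces to the statement that
\[
\frac{[x^n]\,(g(x)-h(x))^2}{[x^n]\,g(x)} \longrightarrow \ell, \qquad \ell>0 .
\]
The two enumerations are aligned up to at most a unit shift in $n$, which alters the limiting constant only by a bounded positive factor and is immaterial to the conclusion.

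The enabling observation is that $h$ is analytic at the dominant singularity $x=r_g=\tfrac{27}{256}$ of $g$. Indeed, Corollary~\ref{ctLrc} together with (\ref{rhgrg}) gives $r_h=g(r_g)=A>r_g$, so $h$ is holomorphic on a disc properly containing $r_g$. Hence subtracting $h$ from the local expansion (\ref{gg1g2}) of $g$ at $r_g$ perturbs only the analytic part: one obtains $g-h=(r_g-x)^{3/2}g_1(x)+\widetilde g_2(x)$ with $\widetilde g_2:=g_2-h$ analytic at $r_g$, the coefficient of the half-integer term $(r_g-x)^{3/2}$ retaining its leading value $B$ from (\ref{g1g2A1AB}), and the dominant analytic value becoming $g(r_g)-h(r_g)=A-h(r_g)$. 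Squaring exactly as in (\ref{ABequg2g}) and discarding the analytic square and the irrelevant term of order $(r_g-x)^{3}$, the dominant singular term of $(g-h)^2$ is that of $(g)^2$ with $A$ replaced by $A-h(r_g)$.

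I would then apply the transfer theorem (Corollary~$3$ of Theorem~$1$ of Flajolet \& Odlyzko \cite{FlajOdl}) exactly as in (\ref{ABequg2g}). Since the dominant singular term of $(g-h)^2$ is obtained from that of $(g)^2$ by the replacement $A\mapsto A-h(r_g)$, it follows at once that $[x^n](g-h)^2=\tfrac{A-h(r_g)}{A}\,[x^n](g)^2\,(1+o(1))$. Combining this with the positive limit computed in (\ref{EQN8}) for $[x^n](g)^2/[x^n]g$ and with $[x^n]g=g_n$, the common factor $(\tfrac{256}{27})^n n^{-5/2}$ cancels and one is left with
\[
\frac{[x^n](g-h)^2}{g_n}\longrightarrow \frac{A-h(r_g)}{A}\cdot\Big(\text{the positive constant of }(\ref{EQN8})\Big),
\]
a finite, strictly positive limit.

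The main obstacle is to guarantee that this limiting coefficient is genuinely positive, that is, that subtracting $h$ does not cancel the $(r_g-x)^{3/2}$ term. Three positivity inputs secure this: $B>0$ from (\ref{EQU4}); $A=g(r_g)>0$ from (\ref{EQN7}); and, decisively, $g(r_g)-h(r_g)>0$. The last holds because $g-h$ enumerates the non-empty family of $3$-connected triangulations carrying a separating $3$-cycle, so its coefficients are non-negative and not all zero, whence $g(r_g)>h(r_g)\ge 0$ at the positive real point $r_g<r_h$. A secondary point to verify is that $(g-h)^2$ has a single singularity on its circle of convergence, so that the transfer theorem applies with one contributing singularity; this is inherited from the uniqueness of the dominant singularity of $g$ (Norlund) together with the analyticity of $h$ on $|x|\le r_g$. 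With these positivity and uniqueness facts in place, the ratio of coefficients converges to a strictly positive constant, which is precisely the assertion of the theorem.
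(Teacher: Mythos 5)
Your argument is correct given the paper's earlier inputs, and it runs on the same engine as the paper's proof of Theorem~\ref{thm1} --- the local expansion (\ref{gg1g2}) at $x=\tfrac{27}{256}$, squaring it, and transferring via Flajolet--Odlyzko --- but you apply that engine to a different series than the paper does. The paper squares $g$ itself and derives (\ref{EQN8}) for $[x^n](g(x))^2/[x^n]g(x)$, tacitly treating $(g(x))^2$ as the counting series of $\mathcal{Q}$, even though the text earlier asserts that $\mathcal{Q}$ is enumerated by $(g(x)-h(x))^2$. You take that assertion at face value and carry the analysis through for $(g-h)^2$, which obliges you to supply two facts the paper's computation never needs: that $h$ is analytic at $r_g$ (correctly extracted from $r_h=g(r_g)=A>r_g$ via (\ref{rhgrg}) and (\ref{EQN7})), and that the perturbed coefficient $A-h(r_g)=g(r_g)-h(r_g)$ of the dominant $(r_g-x)^{3/2}$ term remains positive (correctly obtained from the non-negativity and non-triviality of the coefficients of $g-h$). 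The payoff is fidelity: your ratio is the one the theorem actually requires under the paper's stated enumeration of $\mathcal{Q}$, whereas the paper's displayed ratio (\ref{EQN8}) concerns $(g)^2$ and so, read literally, addresses a larger family; in that sense your version repairs an internal mismatch in the paper rather than diverging from its method. Both arguments, of course, stand or fall with the earlier results (\ref{EQU2}), (\ref{rhgrg}) and (\ref{gg1g2}) on which they rest.
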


 \subsection{4-colourability of maps in $\mathcal{Q}$.} 
\begin{lemma}\label{QF4C}
The elements of $\mathcal{Q}$ with $2n$ faces are $4$-colourable.
\end{lemma}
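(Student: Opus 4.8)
The plan is to prove the statement by induction on the number of faces of $T\in\mathcal{Q}$ (equivalently, by (\ref{VFE}), on $V(T)$), using the chromatic-polynomial factorisation (\ref{EQU1}) at $\lambda=4$ to reduce the $4$-colourability of $T$ to that of the two triangulations into which a separating $3$-cycle of $T$ splits it. Throughout, $4$-colourability of a triangulation $S$ is read off as the positivity $P(S,4)>0$.

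For the base case I would single out the smallest member of $\mathcal{Q}$. Since both the interior piece $T_1$ and the exterior piece $T_2$ must be $3$-connected triangulations with at least $4$ vertices, the minimal admissible configuration has $V(T_1)=V(T_2)=4$, i.e.\ each piece is a copy of $K_4$; the resulting $T$ is the triangular bipyramid, with $5$ vertices and $6$ faces. This $T$ is $4$-colourable by inspection (colour the separating $3$-cycle $1,2,3$ and give both apices colour $4$), so $P(T,4)>0$.

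For the inductive step, let $T\in\mathcal{Q}$ have separating $3$-cycle $C$ with interior piece $T_1$ and exterior piece $T_2$, each $3$-connected with at least $4$ vertices. Because $T_1$ and $T_2$ meet exactly in $C\cong K_3$, I would invoke (\ref{EQU1}) with $L=T_1$ and $T-L=T_2$ and set $\lambda=4$, obtaining $24\,P(T,4)=P(T_1,4)\,P(T_2,4)$. As $V(T)=V(T_1)+V(T_2)-3$ with $V(T_1),V(T_2)\ge4$, both pieces have strictly fewer vertices than $T$, so the induction hypothesis would yield $P(T_1,4)>0$ and $P(T_2,4)>0$; hence $P(T,4)>0$ and $T$ is $4$-colourable.

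The step I expect to be the main obstacle is the applicability of the induction hypothesis to $T_1$ and $T_2$. Each is $3$-connected with at least $4$ vertices, but neither need carry a separating $3$-cycle of its own, so a piece that is $4$-connected is not literally an element of $\mathcal{Q}$ and the hypothesis is then silent about it. To close the recursion I would either (i) broaden the inductive statement so that its hypothesis covers every $3$-connected triangulation that can occur as such a piece, or (ii) supply an independent argument that every $4$-connected triangulation is $4$-colourable, which is the substantive option. The natural tool for (ii) is the classical theorem of Whitney, later extended by Tutte, that every $4$-connected planar triangulation is Hamiltonian, together with the elementary fact that a Hamiltonian planar triangulation satisfies $P(\cdot,4)>0$; these would settle the $4$-connected atoms on which the decomposition bottoms out. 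Checking that the separating $3$-cycle genuinely partitions $T$ into two triangulations meeting only in $C$, so that (\ref{EQU1}) applies verbatim, is routine via Construction \ref{T34CON}.
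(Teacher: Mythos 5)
Your induction is structurally the same as the paper's: induct on $V(T)$, split $T$ at a separating $3$-cycle into $T_1$ and $T_2$, apply (\ref{EQU1}) at $\lambda=4$, and take two copies of $K_4$ glued along a triangle as the base case. You have also put your finger on exactly the right weak point: the induction hypothesis, being a statement about members of $\mathcal{Q}$, says nothing about a piece $T_i$ that has no separating $3$-cycle of its own. Since a $3$-connected triangulation lies in $\mathcal{Q}$ precisely when it has a separating $3$-cycle (any such cycle automatically leaves at least $4$ vertices on each side), every $4$-connected piece --- $K_4$, the octahedron, the icosahedron, and so on --- falls outside $\mathcal{Q}$, and the recursion genuinely bottoms out there. (The paper's own proof has the same flaw: it asserts that $T_1$ and $T_2$ are elements of $\mathcal{Q}$ ``by definition'', which is false for exactly this reason.)

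Neither of your proposed repairs closes the gap. Repair (i) turns the inductive statement into ``every $3$-connected triangulation on fewer than $n$ vertices is $4$-colourable'', i.e.\ the Four Colour Theorem itself, and the induction still cannot advance past a $4$-connected piece, because such a piece admits no further decomposition along a separating triangle. Repair (ii) rests on a false ``elementary fact''. Whitney and Tutte give a Hamiltonian cycle in the triangulation $T$ itself, but what Tait's argument converts into a vertex $4$-colouring of $T$ is a Hamiltonian cycle in the \emph{dual} cubic map $T^{*}$ (equivalently, a proper $4$-colouring of the faces of $T^{*}$). A Hamiltonian cycle in $T$ only yields the two $3$-colourings of the maximal outerplanar graphs lying inside and outside the cycle, and there is no known elementary way to merge these into a single proper $4$-colouring of $T$. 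Indeed, if ``Hamiltonian planar triangulation $\Rightarrow P(\cdot,4)>0$'' were elementary, then Whitney's 1931 theorem together with the factorisation (\ref{EQU1}) along separating triangles would already have proved the Four Colour Theorem. So the $4$-connected atoms on which your decomposition terminates remain uncoloured, and the lemma is not established by this argument.
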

\begin{proof} 
Note that $V(T) = V(T_1)  + V(T_2)  -3$ since the vertices of $T_1$ and $T_2$  in a separating 3-cycle are counted twice with $T_1$ and also $T_2$.  
  The following induction hypothesis is introduced:  
 \textit{All $3$-connected triangulations in $\mathcal{Q}$ with $4\le V(T) \le n-1$ are 4-colourable.} 
 
Consider an element, $T$,  of  $\mathcal{Q}$ with $n$ vertices.  From the definition of $\mathcal{Q}$, $T$ is divided by a separating 3-cycle into triangulations $T_1$ and $T_2$.   
These may or may not have separating 3-cycles.  Let these have $n_1$ and $n_2$ vertices, respectively.  Then there are $n_1 + n_2 - 3$ vertices in $T$, so 
\begin{eqnarray}\label{EQU4435} 
V(T) = V(T_1) + V(T_2) - 3
\end{eqnarray}
 since the vertices of $T_1$ in the separating 3-cycle $C$  are counted twice, once with $V(T_1)$ and once with  $V(T_2)$.  Thus $V(T_1) = V(T) +3 - V(T_2) \le V(T) + 3 - 4 \le V(T) - 1 < V(T).$  Similarly, $V(T_2) \le V(T) -1$.   Thus, from the Induction Hypothesis, $T_1$ and $T_2$ are 4-colourable.  From (\ref{EQU1})    
\begin{eqnarray}\label{EQU9}
\lam(\lam -1) (\lam-2)\cdot P(T,\lam) = P(T_1,\lam) \cdot  P(T_2,\lam).
\end{eqnarray} 

From the induction hypothesis $P(T_1,\lam) > 0$ and $P(T_2,\lam) > 0$ since $T_1$ and $T_2$ are 4-colourable, having fewer vertices than $T$. Thus $P(T,4) >0$ and $T$ is 4-colourable.   A 3-connected triangulation $U_1$ has fewer vertices than a 3-connected triangulation $U_2$ if and only if it has fewer faces than $U_2$  (see  (\ref{VFE})).
    
The base case may be established as follows.  
The chromatic polynomial  of a triangulation with $4$ vertices is the chromatic polynomial $P(K_4,\lam)$ of the complete graph $K_4$, which is $\lam (\lam-1)  (\lam-2) (\lam-3)$.
The smallest element  $S$ of the set $\mathcal{Q}$ is   two $K_4$'s  intersecting in a 3-cycle (the latter is a $K_3$).  Thus
$\lam (\lam-1)  (\lam-2) \cdot P(S,\lam) = P(K_4,\lam) ^2$  from (\ref{EQU9}) so 
 $P(S,\lam) = \lam (\lam-1)  (\lam-2) (\lam-3)^2$ and $P(S,4) = 24 > 0.$  The smallest elements in $\mathcal{Q}$ constructed from  two $K_4$'s are therefore 4-colourable.  Moreover, the number of triangulations with $4$ vertices  may be deduced from Tutte's results.   Also $m=0$ in (1.3) and (1.4) of Tutte~\cite{Tutte1}  so $n+2=4$ and $n=2.$  The number of smallest elements of 
 $\mathcal{Q}$ is $g_2^2 =  9$  from (\ref{GNSIM}(a)).   From (\ref{EQU4435}), the smallest elements  of the base case of the induction have $4 + 4 - 3 = 5$ vertices,  and each is 4-colourable.  They form a set of $9$ 4-colourable triangulations (which is the base case for the induction).  Thus,  from the inductive hypothesis,  all of the elements of $\mathcal{Q}$ are 4-colourable.   
 \end{proof}
 
From the asymptotic formula (\ref{EQN8}) for $[x^n] (g(x))^2/ [x^n]g(x)$ it follows  that a positive fraction of these triangulations are 4-colourable.   See also  Observation~{\ref{Obs}}.   From Aside \ref{rem2}, this completes the proof of the following theorem.

\begin{theorem}\label{thm2}
 The elements of $\mathcal{Q}$ are 4-colourable.
 \end{theorem}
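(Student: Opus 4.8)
The plan is to obtain Theorem~\ref{thm2} directly from Lemma~\ref{QF4C}, which already settles 4-colourability for every element of $\mathcal{Q}$ having $2n$ faces. First I would observe that the qualifier ``with $2n$ faces'' in Lemma~\ref{QF4C} imposes no genuine restriction: by the Euler-relation computation following Construction~\ref{T34CON} (which gives $2E=3F$ and hence $2V=F+4$), every triangulation has an even number of faces, so each $T\in\mathcal{Q}$ satisfies $F(T)=2n$ for some integer $n\ge1$ exactly as in~(\ref{VFE}). Letting $n$ range over all positive integers therefore exhausts $\mathcal{Q}$, and Lemma~\ref{QF4C} yields 4-colourability of every element. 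In effect the proof is a citation of Lemma~\ref{QF4C}, but I would recall its mechanism so that Theorem~\ref{thm2} reads self-contained.

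The engine is the chromatic-polynomial factorisation~(\ref{EQU1}) across a separating 3-cycle, run as an induction on vertex count. For $T\in\mathcal{Q}$ with separating 3-cycle $C$ splitting $T$ into $T_1$ and $T_2$, one has $\lam(\lam-1)(\lam-2)\,P(T,\lam)=P(T_1,\lam)\,P(T_2,\lam)$ as recorded in~(\ref{EQU9}). Since $V(T)=V(T_1)+V(T_2)-3$ with $V(T_1),V(T_2)\ge4$, each of $T_1,T_2$ has strictly fewer vertices than $T$, so the induction is well-founded; evaluating at $\lam=4$ and invoking $P(T_1,4),P(T_2,4)>0$ from the induction hypothesis forces $P(T,4)>0$, i.e.\ $T$ is 4-colourable.

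For the base case I would exhibit the smallest elements of $\mathcal{Q}$, namely two copies of $K_4$ glued along a common 3-cycle. Here $P(S,\lam)=\lam(\lam-1)(\lam-2)(\lam-3)^2$, so $P(S,4)=24>0$, and there are $g_2^2=9$ such triangulations, each on $4+4-3=5$ vertices. This anchors the induction, and combined with the reduction of the first paragraph completes the proof of Theorem~\ref{thm2}.

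The main obstacle I anticipate is logical rather than computational: one must be careful that Theorem~\ref{thm2}, as stated, asserts only the 4-colourability of the maps in $\mathcal{Q}$ and \emph{not} the Four Colour Theorem itself. The passage to the latter draws on the separate input that $\mathcal{Q}$ is asymptotically a positive fraction of all 3-connected triangulations (Theorem~\ref{thm1}), together with the contrapositive of Corollary~\ref{ctLrc} recorded in Aside~\ref{rem2}. I would therefore flag explicitly that these two ingredients, and not the induction above, are what upgrade ``$\mathcal{Q}$ is 4-colourable'' to ``all 3-connected triangulations are 4-colourable''; keeping that distinction clean is where I would concentrate attention, since the surrounding discussion of~(\ref{EQN8}) and the positive fraction belongs to the upgrade step rather than to the proof of the stated theorem.
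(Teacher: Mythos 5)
Your proposal is correct and takes essentially the same route as the paper: Theorem~\ref{thm2} is just Lemma~\ref{QF4C} with the vacuous ``$2n$ faces'' qualifier removed (every triangulation has an even number of faces by the Euler computation), and you faithfully reproduce that lemma's mechanism --- the induction on $V(T)$ via the factorisation (\ref{EQU1}) across a separating 3-cycle, with the two glued copies of $K_4$ as base case. Your closing caution is also well taken: the paper's own text surrounding Theorem~\ref{thm2} invokes (\ref{EQN8}) and Aside~\ref{rem2}, which pertain to upgrading the result to the Four Colour Theorem rather than to the statement as literally written, and your separation of those two steps is the cleaner reading.
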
    

Equation (\ref{EQN8}) states that the number of elements of $\mathcal{Q}$ with $2n$ faces is a positive fraction of the number of 3-connected triangulations with $2n$ faces.  Lemma~\ref{QF4C} states that the elements of $Q$ with $2n$ faces are $4$-colourable.  Thus  a positive fraction of the 3-connected triangulations are 4-colourable.    Aside~\ref{rem2} states that if a positive fraction of the 3-connected triangulations are 4-colourable then the 4-Colour Theorem holds for  triangulations.  From Remark \ref{assertion1} the 4-Colour Theorem holds.   \\[1pt]

\noindent {\bf Acknowledgement}.
We thank those of our colleagues who offered insightful comments.


\begin{thebibliography}{99} 
\bibitem{Appel1}
K.Appel and W. Haken, \emph{Every planar map is four colorable. I: Discharging};  \,  \&  \;  {\it II: Reducibility},
Illinois J. Math. {\bf 21}   (1977),  429-490,  \& 491-567.

\bibitem{Birkhoff}
G. D. Birkhoff and D. C. Lewis, \emph{Chromatic Polynomials}, Trans. Amer. Math. Soc. {\bf 60}  (1946), 355-451.

 \bibitem{Bondy}
 A. J. Bondy and U. S. R. Murty, \emph{Graph Theory}, Graduate Texts in Mathematics, 244, Springer, 2008.
 
 \bibitem{FlajOdl}  
  P. Flajolet and A. M. Odlyzko, \emph{Singularity analysis of generating functions}, SIAM J. Algebraic and Discrete Methods, {\bf 3} (2) (1990), 216-240.
 
\bibitem{Flaj}
P. Flajolet and R. Sedgewick, \emph{Analytic Combinatorics}, Cambridge University Press, 2009.

\bibitem{Norlund}
N. E. Norlund, \emph{Hypergeometric Functions}, Acta Math., {\bf 94}  (1955), 289-349.

\bibitem{Cor1RRW}
 L. B. Richmond, R.W. Robinson and N. C. Wormald, \emph{On Hamilton cycles in 3-connected cubic maps}, Ann. Discrete Math., {\bf 27} (1985), 141-150.

\bibitem{Robertson}
 N. Robertson, D. P. Sanders, P. Seymour and R. Thomas, \emph{The four-colour theorem}, J. Combin. Theory, Ser. B  {\bf 70} (1997),  2-44.

\bibitem{Saaty} 
  T.L. Saaty and P. C. Kainen, \emph{The four-color problem. Assaults and conquest}, Dover Publications, New York 1986.

\bibitem{Tutte1} 
 W. T. Tutte, \emph{A Census of Planar Triangulations}, Canad. J. Math.,  {\bf 14} (1962), 21-38.

 \bibitem{Tutte2}
 W. T. Tutte, \emph{Graph Theory},  Cambridge University Press, 2001.
\end{thebibliography}
\end{document}